\def\ps@pprintTitle{
    \let\@oddhead\@empty
    \let\@evenhead\@empty
    \def\@oddfoot{}
    \let\@evenfoot\@oddfoot}
\journal{}
\newcommand{\R}{\mathbb{R}}
\newcommand{\N}{\mathbb{N}}
\newcommand{\fucik}{Fu\v{c}\'{\i}k}
\newcommand{\norma}[1]{\left\|#1\right\|}
\newcommand{\skals}[2]{\left\langle#1, #2\right\rangle}
\newcommand{\QN}{\mathcal{Q}^{\texttt{n}}}
\newcommand{\QE}{\mathcal{Q}^{\texttt{e}}}
\newcommand{\QW}{\mathcal{Q}^{\texttt{w}}}
\newcommand{\QS}{\mathcal{Q}^{\texttt{s}}}
\newcommand{\AS}{A^{\texttt{s}}}
\newcommand{\adj}{\ast}
\newcommand{\PP}{P}
\newcommand{\PK}{P_{K}}
\newcommand{\dd}{\mathrm{d}}
\DeclareMathOperator{\dom}{Dom}
\DeclareMathOperator{\diag}{diag}
\DeclareMathOperator{\Ker}{Ker}
\DeclareMathOperator{\Img}{Im}
\DeclareMathOperator{\sspan}{span}
\DeclareMathOperator{\codim}{codim}
\DeclareMathOperator{\sgn}{sgn}
\newtheorem{theorem}{Theorem}
\newdefinition{definition}[theorem]{Definition}
\newdefinition{remark}[theorem]{Remark}
\newdefinition{example}[theorem]{Example}
\theoremstyle{definition}
\newtheorem*{acknowledgement}{Acknowledgement}
\begin{document}

\begin{frontmatter}

\title{\textbf{Fu\v{c}\'{\i}k spectrum for discrete systems:\\ curves and their tangent lines}}

\author{Gabriela Holubov\'{a}\corref{cor1}}
\ead{gabriela@kma.zcu.cz}
\cortext[cor1]{Corresponding author}

\author{Petr Ne\v{c}esal}
\ead{pnecesal@kma.zcu.cz}

\address{Department of Mathematics and NTIS, Faculty of Applied Sciences, University of~West Bohemia, Univerzitn\'{\i}~8, 301~00~Plze\v{n}, Czech Republic}

\begin{abstract}
In this paper, we study the \fucik~spectrum of a square matrix $A$ and 
provide necessary and sufficient conditions for the existence of \fucik\ curves 
emanating from the point $(\lambda,\lambda)$ with $\lambda$ being a real eigenvalue of~$A$. 
We extend recent results by Maroncelli (2024) and remove his assumptions on symmetry 
of $A$ and simplicity of $\lambda$. We show that the number of \fucik\ curves can 
significantly exceed the multiplicity of $\lambda$ and determine all the possible 
directions they can emanate in. We also treat the situation when the algebraic 
multiplicity of $\lambda$ is greater than the geometric one and show that in 
such a case the \fucik\ curves can loose their smoothness and provide the slopes 
of their ``one-sided tangent lines''. Finally, we offer two possible generalizations: 
the situation off the diagonal and \fucik\ spectrum of a general Fredholm operator on 
the Hilbert space with a lattice structure.
\end{abstract}

\begin{keyword}
\fucik~spectrum \sep
nonlinear matrix equation \sep
eigenvalues \sep
tangent lines \sep
Implicit Function Theorem \sep
Lyapunov-Schmidt reduction

\medskip

\MSC 15A18 \sep 15A24 \sep 15A60 \sep 47A75
\end{keyword}

\end{frontmatter}

\section{Introduction}
In \cite{maroncelli}, Maroncelli describes (besides others) the \fucik\ spectrum of a symmetric matrix $A$ 
close to the diagonal. In particular, he shows that there exists a smooth decreasing \fucik\ curve emanating 
from the point $(\lambda,\lambda)$ with the slope $-\norma{u^+}^2/\norma{u^-}^2 $, where $\lambda$ is 
a simple eigenvalue of $A$ and $u$ the corresponding eigenvector with all components nonzero. 
In his proof, the following assumptions are essential:
\begin{description}
\item[$(\texttt{A1})$] symmetry of $A$, 
\item[$(\texttt{A2})$] simplicity of $\lambda$,
\item[$(\texttt{A3})$] nontriviality of all components of $u$.
\end{description}
In concluding remarks he asks, 
what happens if some of these three conditions is violated.

In this paper, we provide the answers and remove assumptions 
$(\texttt{A1})$, $(\texttt{A2})$ and partially also $(\texttt{A3})$. 
Motivated by \cite{bfs1} and \cite{bfs2}, we show what are the possible slopes of \fucik\ curves 
emanating from $(\lambda,\lambda)$ in the case of a non-symmetric matrix and a non-simple real eigenvalue 
$\lambda$ (see Theorems \ref{th_NC1} and \ref{th_NC2}). Under additional nondegeneracy condition and 
assumption $(\texttt{A3})$ we prove the existence of these curves (see Theorem~\ref{th_SC}). Moreover, we also study 
the degenerate case when the algebraic multiplicity of $\lambda$ is higher then the geometric one. 
In such a case,
we show that the bifurcating \fucik\ curves can be non-smooth and provide slopes of 
their one-sided tangent lines (see Section \ref{sec_algebraic}).

All our results are illustrated by a series of examples (Sections \ref{sec_examples}, \ref{sec_examples2}).
Moreover, we suggest a possible generalization to other Hilbert structures and linear 
mappings (see Section \ref{sec_general}). For this reason, we use the symbol 
$H$ for the Euklidean space throughout the text.

Last but not least, let us recall that the complete characterization of the \fucik\ spectrum of 
a matrix belongs to extremely difficult problems even in small dimensions. Pioneering work was done 
in the paper \cite{margulies} by Margulies and Margulies, some partial results can be found in \cite{hn, nl, ps} and 
in the above mentioned paper \cite{maroncelli}, which provides also a nice review of the state of the art.

\section{Preliminaries}
\label{sec_prelim}
Let us consider $H = \R^n$, $n \in \N$, endowed with the standard Euclidean scalar product $\skals{\cdot}{\cdot}$ and Euclidean norm $\norma{\cdot}$.
That is,
$u \in H$ is an $n$-dimensional vector $u = [u_1,\dots, u_n]^t$,
and let us define its positive and negative parts componentwise, i.e., by
$u^{\pm} := [u_1^{\pm}, \dots, u_n^{\pm}]^t$ with
$$
u_i^+ := \max\{u_i, 0\}, \quad u_i^- := \max\{-u_i, 0\}, \quad i = 1, \dots, n.
$$
Clearly, $u = u^+ - u^-$, and $|u| := [|u_1|, \dots, |u_n|]^t = u^+ + u^-$.
Further, let us consider a \emph{linear operator} $A: H \to H$ represented by a real $n$-by-$n$ matrix, and denote $A^{\adj}$ the corresponding adjoint operator (i.e., the transpose of the matrix $A$).

\begin{definition}
The \fucik~spectrum of an operator $A: H \to H$ is a set $\Sigma(A)$ of all pairs
$(\alpha, \beta)\in\R^{2}$, for which the problem
\begin{equation}
\label{e.0}
  A u = \alpha u^{+} - \beta u^{-}
\end{equation}
has a nontrivial solution $u \in H$.
\end{definition}

Since we study the behavior of the \fucik\ spectrum $\Sigma(A)$ close to the diagonal $\alpha = \beta$, it is more convenient to write (\ref{e.0}) in the equivalent form
\begin{equation}
\label{e.d0}
A u = \frac{\alpha - \beta}{2} |u| + \frac{\alpha + \beta}{2} u.
\end{equation}
From now, let us consider an arbitrary but fixed real eigenvalue of $A$, $\lambda \in \sigma(A) \cap \R$, 
and transform $(\alpha,\beta)$ into $(\varepsilon, \eta)$ by
$$
\alpha := \varepsilon (\eta+1) + \lambda, \qquad \beta := \varepsilon (\eta-1) + \lambda,
$$
that is
$$
\varepsilon = \frac{\alpha - \beta}{2}, \qquad \varepsilon \eta= \frac{\alpha +\beta}{2} - \lambda.
$$
Using this transform, the problem (\ref{e.d0}) reads as
\begin{equation}
  \label{e.d1}
  (A - \lambda I) u = \varepsilon (|u| + \eta u)
\end{equation}
and instead of $(\alpha,\beta,u)$ with $\alpha\neq \beta$, we are looking for a triple
$(\varepsilon, \eta, u)$ for $\varepsilon \not= 0$.
Notice that $\varepsilon$ indicates how ``close'' we are to the diagonal $\alpha = \beta = \lambda$, and $\eta$ is related to the ``direction'' from the point $(\lambda,\lambda)$ on the diagonal in $\alpha\beta$ plane.
Namely, we have
\begin{equation}
\label{e.transform}
\eta = \frac{(\alpha - \lambda) + (\beta - \lambda)}{(\alpha - \lambda) - (\beta - \lambda)} \qquad
\mbox{and}
\qquad
\frac{\eta - 1}{\eta + 1} = \frac{\beta - \lambda}{\alpha - \lambda}.
\end{equation}
In particular, we study the existence of the \fucik\ curve described by $(\varepsilon, \eta(\varepsilon), u(\varepsilon))$
and the limit value 
\begin{align*}
\lim_{\varepsilon \to 0} \eta(\varepsilon) =: \eta_0,
\end{align*} 
which provides us the information about the slope of the \fucik\ curve emanating from the point $(\lambda,\lambda)$.

\medskip

To obtain required information, we will use Lyapunov-Schmidt reduction and Implicit Function Theorem. Let us denote
$$
X_{1} := \Ker (A - \lambda  I), \quad  Y_{2} := \Img (A - \lambda  I),
$$
and by $X_{2}$, $Y_{1}$ their orthogonal complements, i.e.,
$$
  H = X_{1} \oplus X_{2} = Y_{1} \oplus Y_{2}.
$$
We recall that
$$
  Y_{1} = \Ker (A^{\adj} - \lambda  I).
$$
Further, we denote by $P$ and $Q$ the orthogonal projections onto $X_{1}$ and $Y_{1}$, respectively. 
For any $u = u(\varepsilon) \in H$, we have
\begin{eqnarray*}
       Q (A - \lambda  I)  u & = & 0,\\
  (I- Q) (A - \lambda  I)  u & = & (A - \lambda  I)  u.
\end{eqnarray*}
Hence, the problem (\ref{e.d1}) is equivalent to the couple of equations
\begin{eqnarray}
                    0 & = & \varepsilon Q (|u| + \eta u),       \label{e.d2}\\
  (A - \lambda  I)  u & = & \varepsilon (I - Q) (|u| + \eta  u). \label{e.d3}
\end{eqnarray}
Moreover,
$$
  (A - \lambda I) u = (A - \lambda I)(P u + (I - P) u) = (A - \lambda I)(I - P) u
$$
and the restriction
$$
 (A - \lambda I) \big|_{X_{2}}:\ X_{2} \to Y_{2}
$$
is invertible with the inverse $(A - \lambda I)\big|_{X_{2}}^{-1}$ defined on $Y_{2}$.
Hence, for any $\varepsilon \not= 0$, the problem (\ref{e.d2})--(\ref{e.d3}) becomes
\begin{eqnarray}
  0 & = &  Q (|u| + \eta u),  \label{e.d4}\\
  (I - P) u & = & \varepsilon (A - \lambda I) \big|_{X_{2}}^{-1} (I - Q)(|u| + \eta u).
  \label{e.d5}
\end{eqnarray}

\bigskip
Finally, let us recall the following differentiability notions and properties in Euclidean spaces. Let $F: \Omega \subset \R^{k_1} \times \dots \times \R^{k_n} \to \R^m$. 
By a (partial) derivative of $F$ with respect to a vector $x_i = [x_{i,1}, \dots, x_{i,k_i}]^t$ at $x  \in \Omega$ we mean a Jacobi ($m\times k_i$) matrix
$$
\frac{\partial F}{\partial x_i}(x) = \left[ 
\begin{array}{ccc} \displaystyle
\frac{\partial F_1}{\partial x_{i,1}}(x) & \dots & \displaystyle \frac{\partial F_1}{\partial x_{i,k_i}}(x)\\
\\
\displaystyle \frac{\partial F_m}{\partial x_{i,1}}(x) & \dots & \displaystyle \frac{\partial F_m}{\partial x_{i,k_i}}(x)
\end{array}
\right].
$$
Further, let us introduce the ``sign'' mapping $\Xi: \R^n \to \R^{n \times n}$ and
the ``zero-characteristic'' mapping $\Xi^0: \R^n \to \R^{n \times n}$
by diagonal matrices
\begin{eqnarray*}
\Xi(u) & := & \diag\left([\sgn(u_1),\sgn(u_2), \dots, \sgn(u_n)]\right),\\[3mm]
(\Xi^0(u))_{ij} & := & \left\{
\begin{array}{ll}
1 & i=j \mbox{ and } u_i =0,\\
0 & \mbox{otherwise}.
\end{array}
\right. 
\end{eqnarray*}
Notice that $\Xi^0(u) = I - \Xi(|u|)$. If $u_0 \in \R^n$ is such that $u_{0,i} \not= 0$ for all $i=1,\dots,n$, then the mapping $f: u \mapsto |u|$ is differentiable at $u_0$ and the derivative
$$
\frac{\partial f}{\partial u} (u_0) = \Xi(u_0).
$$
Moreover, for any $v,z \in \R^n$ and $\varepsilon > 0$ small enough, we have
$$
f(v+\varepsilon z) = |v + \varepsilon z| = |v| + \varepsilon \, \Xi(v)z + \varepsilon \, \Xi^0(v)|z|.
$$
Now, we are ready to formulate our main results.


\section{Main results}
\label{sec_main}

We will start with a necessary condition, which provides all possible directions the \fucik\ curves can emanate in from the diagonal point $(\lambda, \lambda)$ in $\alpha\beta$-plane.

\begin{theorem}
\label{th_NC1}
Let $A: ~H \to H$ be a linear operator represented by a $n$-by-$n$ matrix with a real eigenvalue $\lambda$.
If there exists a sequence of triples $(\varepsilon_n, \eta_n, u_n) \subset \R \times \R \times H$ solving the \fucik\ spectrum problem \eqref{e.d1} such that $\varepsilon_n \not= 0$, $\|u_n\| = 1$,  $n \in \N$, and $(\varepsilon_n, \eta_n, u_n)  \to (0, \eta_0, u_0)$ for $n \to +\infty$, then necessarily $u_0 \in \Ker(A - \lambda I)$, $u_0 \not= 0$, and the projection of
$(|u_0| + \eta_0 u_0)$ onto $\Ker (A^{\adj} - \lambda I)$ is zero, i.e.
\begin{equation}
  \label{e.NC1}
  \forall  v \in \Ker (A^{\ast} - \lambda  I):\ \skals{(|u_0| + \eta_0 u_0)}{v} = 0.
\end{equation}
\end{theorem}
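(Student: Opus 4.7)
The plan is to pass to the limit, along the given sequence, in two places: in the Fučík equation~\eqref{e.d1} itself (this will yield $u_0\in \Ker(A-\lambda I)$ and $u_0\ne 0$) and in the reduced equation~\eqref{e.d4} from the Lyapunov--Schmidt decomposition (this will yield the orthogonality condition~\eqref{e.NC1}). Continuity of the mapping $u\mapsto|u|$ on $\R^n$ and continuity of the linear operators $A-\lambda I$ and $Q$ make both passages to the limit unproblematic, so I do not anticipate any serious obstacle; the work is essentially bookkeeping.

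First, I would apply the identity $(A-\lambda I)u_n=\varepsilon_n(|u_n|+\eta_n u_n)$ and observe that the right-hand side tends to $0$ as $n\to\infty$, since $\|u_n\|=1$, $\eta_n\to\eta_0\in\R$, hence $\||u_n|+\eta_n u_n\|$ is uniformly bounded, while $\varepsilon_n\to 0$. Continuity of $A-\lambda I$ gives $(A-\lambda I)u_0=0$, i.e.\ $u_0\in\Ker(A-\lambda I)$. Moreover, $\|u_0\|=\lim_{n\to\infty}\|u_n\|=1$, so $u_0\neq 0$.

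Next, for each $n$, since $\varepsilon_n\neq 0$, the triple $(\varepsilon_n,\eta_n,u_n)$ solves the reduced equation~\eqref{e.d4} obtained by applying the orthogonal projection $Q$ onto $Y_1=\Ker(A^{\adj}-\lambda I)$; that is,
\begin{equation*}
Q\bigl(|u_n|+\eta_n u_n\bigr)=0,\qquad n\in\N.
\end{equation*}
(This already follows directly from applying $Q$ to~\eqref{e.d1} and using $Q(A-\lambda I)=0$, then dividing by $\varepsilon_n$.) Because $u\mapsto |u|$ is continuous on $\R^n$ and $Q$ is a continuous linear operator, passing to the limit $n\to\infty$ yields $Q(|u_0|+\eta_0 u_0)=0$. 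Since $Q$ is the orthogonal projection onto $\Ker(A^{\adj}-\lambda I)$, this is equivalent to
\begin{equation*}
\skals{|u_0|+\eta_0 u_0}{v}=0\qquad\text{for every }v\in\Ker(A^{\adj}-\lambda I),
\end{equation*}
which is~\eqref{e.NC1}, completing the argument.
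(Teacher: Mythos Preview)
Your proof is correct and follows essentially the same approach as the paper: pass to the limit in the Lyapunov--Schmidt reduced system to obtain $u_0\in\Ker(A-\lambda I)$, $\|u_0\|=1$, and $Q(|u_0|+\eta_0 u_0)=0$. The only cosmetic difference is that the paper deduces $u_0\in\Ker(A-\lambda I)$ from the second reduced equation~\eqref{e.d5} (getting $(I-P)u_0=0$), whereas you pass to the limit directly in~\eqref{e.d1}; both are equally valid.
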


\begin{proof}
Since the problem \eqref{e.d1} is equivalent to (\ref{e.d4})--(\ref{e.d5}), we get for any $n \in \N$
\begin{eqnarray*}
  0 & = &  Q (|u_n| + \eta_n u_n), \\
  (I - P) u_n & = & \varepsilon_n (A - \lambda I) \big|_{X_{2}}^{-1} (I - Q)(|u_n| + \eta_n u_n).
\end{eqnarray*}
Letting $n \to +\infty$, we obtain (due to the boundedness of $u_n$)
$$
 u_0 = P  u_0, \quad \|u_0\| = 1  \quad \mbox{and} \quad Q (| u_0| + \eta_0  u_0) = 0,
$$
which is the required assertion.
\end{proof}

As a consequence, we obtain an analogous assertion concerting a continuous \fucik\ curve and its slope.

\begin{theorem}
\label{th_NC2}
Let $A: ~H \to H$ be a linear operator represented by a $n$-by-$n$ matrix with a real eigenvalue $\lambda$.
If there exists a continuous \fucik\ curve $(\alpha(t),\beta(t))$ on some $I \ni 0$ with the corresponding \fucik\ eigenvectors $u(t) \in H$ such that $\alpha(t) \not= \beta(t)$ for $t\not= 0$, $(\alpha(0), \beta(0)) = (\lambda,\lambda)$ and $u(0) = u_0$, then necessarily $u_0 \in \Ker(A - \lambda I)$, $u_0 \not= 0$, and 
\begin{equation}
  \label{e.NC2}
  \forall  v \in \Ker (A^{\ast} - \lambda  I):\  
  {\alpha'(0)}{\skals{u_0^+}{v}} = {\beta'(0)}{\skals{u_0^-}{v}}.
\end{equation}
\end{theorem}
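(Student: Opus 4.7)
The approach is to pass to the limit $t \to 0$ directly in the \fucik\ equation \eqref{e.0}, after pairing it with an arbitrary element of $\Ker(A^{\adj} - \lambda I)$. This avoids reparametrizing into $(\varepsilon, \eta)$ and the associated singularities.

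First, I would establish that $u_0 \in \Ker(A - \lambda I)$ with $u_0 \neq 0$. By the homogeneity of \eqref{e.0} in $u$ and continuity of $u(\cdot)$, we may rescale so that $\norma{u(t)} = 1$ for $t$ in a neighborhood of $0$; the assumption $u(0) = u_0$ then forces $\norma{u_0} = 1$. Letting $t \to 0$ in the equation $A u(t) = \alpha(t) u^+(t) - \beta(t) u^-(t)$, using continuity of $u \mapsto u^{\pm}$ and $(\alpha(0),\beta(0)) = (\lambda,\lambda)$, yields $A u_0 = \lambda u_0^+ - \lambda u_0^- = \lambda u_0$, so $u_0 \in \Ker(A - \lambda I) \setminus \{0\}$.

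Second, for an arbitrary $v \in \Ker(A^{\adj} - \lambda I)$, I would take the inner product of the \fucik\ equation with $v$. Since $A^{\adj} v = \lambda v$, the left-hand side equals $\skals{u(t)}{A^{\adj} v} = \lambda \skals{u^+(t)}{v} - \lambda \skals{u^-(t)}{v}$, while the right-hand side gives $\alpha(t)\skals{u^+(t)}{v} - \beta(t)\skals{u^-(t)}{v}$. Comparing and rearranging yields the clean identity
$$
(\alpha(t) - \lambda)\skals{u^+(t)}{v} = (\beta(t) - \lambda)\skals{u^-(t)}{v}, \qquad t \in I,
$$
which is the \fucik\ constraint projected onto $\Ker(A^{\adj} - \lambda I)$. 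Dividing by $t \neq 0$ and passing to the limit, the quotients $(\alpha(t) - \lambda)/t$ and $(\beta(t) - \lambda)/t$ tend to $\alpha'(0)$ and $\beta'(0)$ (the differentiability of $\alpha, \beta$ at $0$ being implicit in the notion of \emph{slope} at the point $(\lambda,\lambda)$), and continuity of $u \mapsto u^\pm$ handles the inner products. The limit is exactly \eqref{e.NC2}.

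I do not expect a substantial obstacle; the argument is a routine one-variable derivative computation. A tempting alternative would be to route through Theorem \ref{th_NC1} by identifying $\eta_0 = (\alpha'(0)+\beta'(0))/(\alpha'(0)-\beta'(0))$ and then expanding $\skals{|u_0| + \eta_0 u_0}{v}$, but this breaks down in the tangency case $\alpha'(0) = \beta'(0)$, which the direct approach handles seamlessly. The only delicate point is the interpretation of $\alpha'(0), \beta'(0)$: if the curve admits only one-sided derivatives at $0$ (as in the degenerate setting of Section \ref{sec_algebraic}), the same identity holds separately for the left and right derivatives, producing slopes of one-sided tangent lines.
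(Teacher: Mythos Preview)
Your argument is correct and more direct than the paper's. The paper simply says the assertion ``follows directly from Theorem~\ref{th_NC1} and relations \eqref{e.transform}'': it reparametrizes the curve into $(\varepsilon,\eta)$-coordinates, extracts a sequence converging to $(0,\eta_0,u_0)$, invokes Theorem~\ref{th_NC1} to obtain $\skals{|u_0|+\eta_0 u_0}{v}=0$, and then rewrites this via \eqref{e.transform} as the slope relation \eqref{e.NC2}. Your route bypasses the $(\varepsilon,\eta)$ machinery entirely: you pair the \fucik\ equation with $v\in\Ker(A^{\adj}-\lambda I)$ to get the exact identity $(\alpha(t)-\lambda)\skals{u^+(t)}{v}=(\beta(t)-\lambda)\skals{u^-(t)}{v}$ for all $t$, then differentiate. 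The paper's approach buys brevity and conceptual continuity with the surrounding framework; yours is self-contained and, as you note, handles the tangency case $\alpha'(0)=\beta'(0)$ without special pleading, whereas routing through Theorem~\ref{th_NC1} would require $\eta_n$ to converge to a finite limit, which fails precisely there. Your observation about one-sided derivatives anticipating Section~\ref{sec_algebraic} is also apt.
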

\begin{proof}
The assertion follows directly from Theorem \ref{th_NC1} and relations \eqref{e.transform}.
\end{proof}

\begin{remark}
If $A$ is selfadjoint (i.e., represented by a symmetric matrix) and $\lambda$ is simple, then (\ref{e.NC2}) reads as
$$
\frac{\dd \beta}{\dd \alpha} (0) = \frac{\beta'(0)}{\alpha'(0)} = \frac{\skals{u_0^+}{u_0}}{\skals{u_0^-}{u_0}} = - \frac{\norma{u_0^+}^2}{\norma{u_0^-}^2}
$$
which fits the result of Maroncelli \cite{maroncelli}.
\end{remark}

Theorems \ref{th_NC1}, \ref{th_NC2} say, in other words, that if there exists $\eta_0  \in \R$  and $u_0 \in \Ker(A - \lambda I)$, $u_0 \not= 0$, satisfying \eqref{e.NC1},
then there is a possible \fucik\ curve emanating from $(\lambda, \lambda)$ in $\alpha\beta$-plane with the slope 
\begin{equation}
  \label{e.d7old}
 \frac{\dd \beta}{\dd \alpha} = \frac{\eta_0 - 1}{\eta_0 + 1}.
\end{equation}

In particular, let $\lambda$ be an eigenvalue of $A$, 
$p := \dim \Ker (A - \lambda I) = \dim \Ker (A^{\ast} - \lambda I)$ and 
$\Ker (A - \lambda I) = \sspan\{u_1,\dots, u_p\}$, $\Ker (A^{\ast} - \lambda I) = \sspan\{v_1,\dots, v_p\}$. 
To find all the possible slopes of \fucik\ curves (i.e., all possible values of $\eta_0$), 
we can proceed in the following way. 
We take an arbitrary eigenvector $u_{0}$ of $A$ corresponding 
to $\lambda$, i.e., $u_{0} = \sum_{i=1}^{p} c_i u_i$ with $c_i \in \R$, and try to find $\eta_{0}$ and 
$c_i$, $i=1,\dots,p$, such that
\begin{equation}
\label{e.d7}
\|u_{0}\|^2 = 1 \quad \wedge \quad {\skals{|u_{0}| + \eta_{0} u_{0}}{v_j}} = 0, \quad j=1,\dots,p.
\end{equation}
This is a (nonlinear) system of $(p+1)$ equations for $(p+1)$ unknown parameters. 
We will illustrate its solutions on particular examples in the next Section.

\medskip


The existence of the \fucik\ curve is given by the following sufficient condition under stronger assumptions.

\begin{theorem}
  \label{th_SC}
Let $A: ~H \to H$ be a linear operator represented by a $n$-by-$n$ matrix with a real eigenvalue $\lambda$.
  Let $(u_{0}, \eta_{0}) \in H\times\R$, $u_{0} \in \Ker (A - \lambda I)$, $\|u_0\| = 1$, be such that all components of $u_0$ are nonzero and \eqref{e.NC1} holds true.
   Further, let the following nondegeneracy condition be satisfied:
  \begin{equation}
    \tag{\texttt{ND}} \label{ND}
    \left.
    \begin{array}{l}
      c \in \R,\ w \in \Ker (A - \lambda I),\ \skals{u_{0}}{w} = 0, \\[3pt]
      Q (\Xi(u_0) w + \eta_{0} w) + c \, Qu_{0} = 0
    \end{array}
    \right\}
    \implies w=0 \mbox{~and~} c=0.
  \end{equation}
  Then there exist continuous functions $\eta(\cdot)$, $u(\cdot)$ defined in a neighborhood $E$ of $0$ such that $u(0) = u_0$, $\eta(0) = \eta_0$,
and
$$
(A - \lambda I) u(\varepsilon)   = \varepsilon (|u(\varepsilon)| + \eta(\varepsilon) u(\varepsilon)), \quad
                         \|u(\varepsilon)\| = 1, \quad
                             \varepsilon \in E,
$$
i.e., 
$$
(\lambda + \varepsilon (\eta(\varepsilon) +1),\lambda + \varepsilon (\eta(\varepsilon)-1)), \quad \varepsilon \in E,
$$ 
describes a part of a \fucik\ curve of $A$ emanating from $(\lambda,\lambda)$ in $\alpha\beta$ plane.
\end{theorem}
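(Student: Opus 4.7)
The plan is to apply the Implicit Function Theorem to the reduced system \eqref{e.d4}--\eqref{e.d5} augmented by the constraint $\|u\| = 1$, based at $(\varepsilon, \eta, u) = (0, \eta_0, u_0)$.

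First, I would parametrize $u$ near $u_0$ by resolving the unit-norm constraint against the direction $u_0$: extend $u_0$ to an orthonormal basis of $X_1$ and set
$$
u = s\, u_0 + x + w, \qquad x \in X_1 \cap u_0^\perp, \quad w \in X_2, \quad s = \sqrt{1 - \|x\|^2 - \|w\|^2}.
$$
In these coordinates, \eqref{e.d4}--\eqref{e.d5} become a system $G(\varepsilon, \eta, x, w) = 0$ with components $G_1 := Q(|u| + \eta u) \in Y_1$ and $G_2 := w - \varepsilon (A - \lambda I)|_{X_2}^{-1}(I - Q)(|u| + \eta u) \in X_2$. The input variable $(\eta, x, w)$ and the target $(G_1, G_2)$ both have dimension $1 + (p-1) + (n-p) = n$. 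Since all components of $u_0$ are nonzero, the map $u \mapsto |u|$ is $C^1$ near $u_0$ with derivative $\Xi(u_0)$, so $G$ is $C^1$ near the base point, where it vanishes by \eqref{e.NC1}.

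Second, I would compute the partial Jacobian $\partial G/\partial(\eta,x,w)$ at the base point. The $\varepsilon$-prefactor in $G_2$ kills all of its derivatives except $\partial_w G_2 = I_{X_2}$, whereas $G_1$ contributes $\partial_\eta G_1 = Q u_0$ and $\partial_{(x,w)} G_1 \cdot (\delta x, \delta w) = Q(\Xi(u_0) + \eta_0 I)(\delta x + \delta w)$. Thus any kernel element first forces $\delta w = 0$, and the remaining identity
$$
Q(\Xi(u_0)\,\delta x + \eta_0\, \delta x) + (\delta\eta)\, Q u_0 = 0,
$$
with $\delta x \in \Ker(A - \lambda I)$ orthogonal to $u_0$, is precisely the premise of the nondegeneracy condition \eqref{ND} (taking $w := \delta x$ and $c := \delta \eta$). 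Its conclusion forces $\delta x = 0$ and $\delta\eta = 0$, so the Jacobian is injective, hence invertible by the matched dimensions.

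The Implicit Function Theorem then yields continuous functions $\eta(\varepsilon)$, $x(\varepsilon)$, $w(\varepsilon)$ on a neighborhood $E$ of $0$ solving $G = 0$, with $\eta(0) = \eta_0$ and $x(0) = w(0) = 0$. Setting $u(\varepsilon) := s(\varepsilon)\, u_0 + x(\varepsilon) + w(\varepsilon)$ produces the desired unit-norm \fucik\ eigenvector family returning $u_0$ at $\varepsilon = 0$, and translating back via $\alpha = \lambda + \varepsilon(\eta + 1)$, $\beta = \lambda + \varepsilon(\eta - 1)$ gives the \fucik\ curve emanating from $(\lambda,\lambda)$. The main work lies in the Jacobian step: I would need to verify carefully that the $\varepsilon$-prefactor of $G_2$ produces the clean decoupling just described and that the leftover $Y_1$-equation after eliminating $\delta w$ matches \eqref{ND} verbatim---the condition \eqref{ND} has evidently been engineered to encode exactly this invertibility, so confirming this match is really the whole game.
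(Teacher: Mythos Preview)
Your proposal is correct and follows the same overall strategy as the paper: apply the Implicit Function Theorem to the Lyapunov--Schmidt reduced system \eqref{e.d4}--\eqref{e.d5} together with the normalisation, and identify the invertibility of the Jacobian at $(0,\eta_0,u_0)$ with condition \eqref{ND}. The verification that the $\varepsilon$-prefactor in the $X_2$-component kills its contribution and that the remaining $Y_1$-equation reproduces \eqref{ND} is exactly the computation the paper carries out.

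The packaging differs in two minor ways. First, the paper keeps $u\in H$ as a free variable and appends $\|u\|^2=1$ as an extra scalar equation, so it works with an $(n+1)\times(n+1)$ Jacobian; you instead parametrise the unit sphere via $s=\sqrt{1-\|x\|^2-\|w\|^2}$ and work with an $n\times n$ Jacobian. Second, the paper introduces an isomorphism $\Lambda:Y_1\to X_1$ to merge \eqref{e.d4}--\eqref{e.d5} into a single fixed-point equation $u=T(\varepsilon,\eta,u)$, whereas you keep the two components $G_1\in Y_1$ and $G_2\in X_2$ separate. Your setup is arguably cleaner (no auxiliary $\Lambda$, and the block-triangular structure of the Jacobian is immediate), while the paper's version avoids the square root and keeps the map manifestly polynomial in $u$. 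Both routes land on the identical kernel computation and invoke \eqref{ND} in the same way.
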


\begin{proof}
From above, for $\varepsilon \not= 0$, the problem (\ref{e.d1}) 
is equivalent to the couple of equations (\ref{e.d4})--(\ref{e.d5}).
Since $X_1$ and $Y_1$ have the same dimension, there exists
an isomorphism $\Lambda$ of $Y_{1}$ onto $X_{1}$.
Hence, instead of (\ref{e.d4})-(\ref{e.d5}), we can write
\begin{eqnarray*}
         P u & = & P u + \Lambda Q (|u| + \eta u),\\
  (I - P)  u & = & \varepsilon (A - \lambda  I) \big|_{X_{2}}^{-1} (I - Q) (|u| + \eta u),
\end{eqnarray*}
or, equivalently,
\begin{equation}
  \label{e.d10}
  u = T(\varepsilon, \eta, u)
\end{equation}
with $T:~\R^2\times H \to H$,
\begin{equation}
  \label{e.d11}
  T(\varepsilon, \eta, u) := \varepsilon (A - \lambda  I) \big|_{X_{2}}^{-1} (I - Q) (|u| + \eta u) + Pu + \Lambda Q (|u| + \eta u).
\end{equation}

\medskip

Now, let us consider the system
\begin{eqnarray}
  \| u \|^2 & = & 1, \label{e.d13}\\
          u & = & T(\varepsilon, \eta,u), \label{e.d12}
\end{eqnarray}
which can be written as $F(\varepsilon, z) = 0$ with $z = (\eta,u) \in \R\times H$
and $F:~ \R\times H \to H \times \R$,
$$
F(\varepsilon,z) = F(\varepsilon,\eta,u) = [\|u\|^2-1;u - T(\varepsilon,\eta,u)]^t,
$$
and apply the Implicit Function Theorem. 
Notice that since $|\cdot|$ is a nonexpansive mapping,
the continuity of the operator $T$ (and hence of $F$) with respect to $z = (\eta,u)$ at $(0, \eta_0, u_{0})$ is ensured.
Further, for $u$ with all components nonzero, we have
\begin{eqnarray*}
    \displaystyle
  \frac{\partial T}{\partial \eta}(\varepsilon, \eta, u)  & = &
  \varepsilon (A - \lambda I) \big|_{X_{2}}^{-1} (I - Q) u + \Lambda Q u,\\
  \frac{\partial T}{\partial u}(\varepsilon, \eta, u)  & = &
  \varepsilon (A - \lambda I) \big|_{X_{2}}^{-1} (I - Q) (\Xi(u) + \eta I) 
 + P  + \Lambda Q (\Xi(u) + \eta I),
\end{eqnarray*}
and hence
\begin{eqnarray*}
    \displaystyle
  \frac{\partial T}{\partial \eta}(0, \eta_0, u_0)  & = &
  \Lambda Q u_0,\\
  \frac{\partial T}{\partial u}(0, \eta_0, u_0)  & = &
P  + \Lambda Q (\Xi(u_0) + \eta_0 I).
\end{eqnarray*}
That is, $\partial F/\partial z (0,z_0)$ is a $(n+1)\times(n+1)$ matrix
$$
\frac{\partial F}{\partial z} (0,z_0) =
\left[
\begin{array}{ll}
0   & 2(u_0)^t\\[2mm]
-T_{\eta}(0,\eta_0,u_0) & I - T_u(0,\eta_0,u_0)
\end{array}
\right] =
\left[
\begin{array}{ll}
0 & 2(u_0)^t\\[2mm]
-\Lambda Q u_0 & I - P  - \Lambda Q (\Xi(u_0) + \eta_0 I)
\end{array}
\right].
$$
The last step is the invertibility of $\frac{\partial F}{\partial z}(0, z_{0})$.
In particular, we must ensure that the system $\frac{\partial F}{\partial z}(0, z_{0})(c,w) = 0 \in \R^{n+1}$, i.e.,
\begin{eqnarray}
 2 \skals{w}{u_{0}} & = & 0, \label{e.d16}\\
  \displaystyle (I - P) w - \Lambda Q (\Xi(u_0) w + \eta_0 w) - c \, \Lambda Q u_{0} & = & 0, \label{e.d17}
\end{eqnarray}
has only a trivial solution $(c, w) = (0, 0)$.
Equations (\ref{e.d16})-(\ref{e.d17}) imply that
$$
  (I - P) w = 0, \quad \skals{w}{u_{0}} = 0,
\quad
  Q (\Xi(u_0) w + \eta_0 w) + c \, Q u_{0} = 0,
$$
which results in $(c, w) = (0, 0)$ due to the nondegeneracy condition (\ref{ND}).
The assertion of Theorem \ref{th_SC} now follows directly from the Implicit Function Theorem.
\end{proof}

\begin{remark}~
\label{rem_selfadjoint}
\begin{enumerate}
\item
If $A$ is a selfadjoint operator (i.e., represented by a symmetric matrix),
the nondegeneracy condition (\ref{ND}) reduces to
\begin{align}  \label{NDS}
    \left.
    \begin{array}{l}
      w \in \Ker (A - \lambda I),\ \skals{u_{0}}{w} = 0, \\[3pt]
      P (\Xi(u_0) w) = \skals{|u_{0}|}{u_{0}} w
    \end{array}
    \right\}
    \implies w=0.
\end{align}
which corresponds to the nondegeneracy condition in \cite[Theorem 3.1]{bfs2}.
Indeed, the selfadjointness of $A$ means that
$A = A^{\adj}$ and
$X_{1} = Y_{1}$, $X_{2} = Y_{2}$, $P = Q$, and the relation (\ref{e.NC1})
implies $\eta_0 = -\skals{|u_{0}|}{u_{0}}$, with $u_{0} \in \Ker(A - \lambda I)$,
$\|u_{0}\| = 1$.
Moreover, $\Xi(u_0) u_{0} = |u_{0}|$.
Hence, if $w \in \Ker(A - \lambda I)$, $\skals{w}{u_{0}} = 0$, then
$$
  \skals{\Xi(u_0) w + \eta_0 w}{u_{0}} = \skals{w}{|u_0| + \eta_{0} u_0} = 0,
$$
which means that $c=0$ in (\ref{ND}).

\item
If $\lambda$ is a simple eigenvalue of a selfadjoint operator $A$, 
the nondegeneracy condition (\ref{ND}) is trivially satisfied.

\item 
Let us note that the nondegeneracy condition \eqref{ND} is not satisfied
if there exists $w \in \Ker (A - \lambda I)$ such that $w$ is perpendicular to $u_{0}$,
all components of $Q (\Xi(u_0) w + \eta_{0} w)$ are zero and 
all components of $Q u_{0}$ are nonzero.

\item 
Finally, the nondegeneracy condition \eqref{ND} is satisfied if for all 
$w \in \Ker (A - \lambda I)$ that are nonzero and perpendicular to $u_{0}$, 
at least one component of $Q (\Xi(u_0) w + \eta_{0} w)$ is nonzero
and at least one component is zero.

\end{enumerate}
\end{remark}

\section{Examples}
\label{sec_examples}

We start with the application of Theorem \ref{th_NC1}. 
The following example shows that the number of (possible) \fucik\ curves 
emanating from the diagonal can significantly exceed the multiplicity of 
the corresponding eigenvalue.

\begin{example}
Let us consider $n\times n$ matrix ($n\in\N$, $n \ge 2$)
\begin{displaymath}
  \AS_{n} = \left[
  \begin{array}{cccc}
    1 & 1 & \dots & 1 \\
    2 & 2 & \dots & 2 \\
    \vdots & \vdots & \dots & \vdots \\
    n & n & \dots & n
  \end{array}\right]
\end{displaymath}
with eigenvalues $\lambda_{1,\dots,n-1} = 0$ and
$\lambda_{n} = n(n+1)/2$.
We have $\dim \Ker \AS_{n}  = \dim \Ker (\AS_{n})^{\ast} = n-1$.
Any $ u \in \Ker \AS_{n}$ can be written as a linear combination of $(n-1)$ eigenvectors
$$
 u = c_{1} u_{1} + c_{2} u_{2} + \dots + c_{n-1} u_{n-1}, \qquad c_{1},\dots,c_{n-1}\in\R,
$$
where $u_{i} = [1, 0, \dots, 0, -1, 0, \dots, 0]^{t}$ has $-1$ on $(i+1)$ position, $i=1,\dots,n-1$.
Moreover, any $v \in \Ker (\AS_{n})^{\ast}$ can be expressed as a linear combination of
eigenvectors $v_{1},\dots,v_{n-1}$ of $(\AS_{n})^{\ast}$,
where $v_{i} = [i+1, 0, \dots, 0, -1, 0, \dots, 0]^{t}$ has $-1$ on $(i+1)$ position, $i=1,\dots,n-1$.

\begin{figure}[t]
  \centerline{
  \setlength{\unitlength}{1mm}
  \begin{picture}(53, 50)(-5, -4)
    \put(  0, 0){\includegraphics[width=4.5cm]{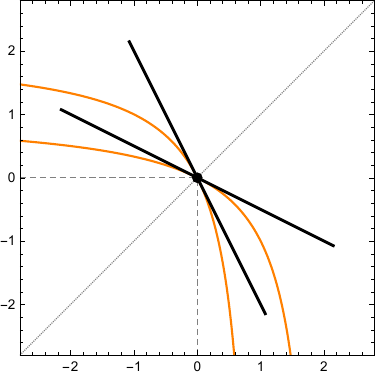}}
    \put(35,-3){\makebox(0,0)[cm]{$\alpha$}}
    \put( -3,37){\makebox(0,0)[cm]{$\beta$}}
  \end{picture}
  \begin{picture}(53, 50)(-5, -4)
    \put(  0, 0){\includegraphics[width=4.5cm]{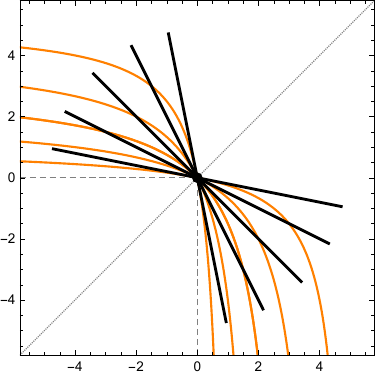}}
    \put(35,-3){\makebox(0,0)[cm]{$\alpha$}}
    \put( -3,37){\makebox(0,0)[cm]{$\beta$}}
  \end{picture}
  \begin{picture}(50, 50)(-5, -4)
    \put(  0, 0){\includegraphics[width=4.5cm]{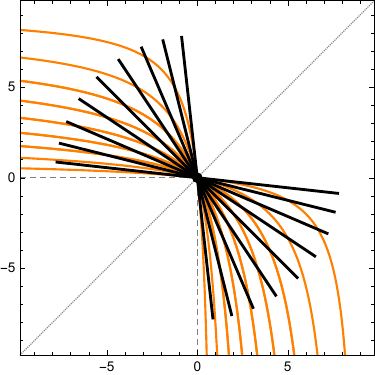}}
    \put(35,-3){\makebox(0,0)[cm]{$\alpha$}}
    \put( -3,37){\makebox(0,0)[cm]{$\beta$}}
  \end{picture}
  }
  \caption{Tangent lines of \fucik~curves emanating from $(0, 0)$, where $0$ is the eigenvalue of
           multiplicity 1 (left,   $2 \times 2$ matrix $\AS_{2}$),
           multiplicity 2 (middle, $3 \times 3$ matrix $\AS_{3}$) and
           multiplicity 3 (right,  $4 \times 4$ matrix $\AS_{4}$).}
  \label{obrtecny}
\end{figure}

We apply Theorem \ref{th_NC1}, i.e., using the necessary condition (\ref{e.NC1}) we find all possible directions the \fucik\ curves can emanate in from the point $(0,0)$ in $\alpha\beta$-plane.
In particular, we are looking for $c_{1},\dots,c_{n-1}\in\R$ and 
$\eta_{0} \in \R$ such that
(\ref{e.d7}) holds true,
which means to solve the following system of $n$ equations
\begin{equation}
  \label{rce2}
  \left.
  \begin{array}{rcl}
    (c_{1} + \dots + c_{n-1})^{2} + c_{1}^{2} + \dots + c_{n-1}^{2} & = & 1, \\
    c_{1} \eta_{0} - |c_{1}| + 2((c_{1} + \dots + c_{n-1})\eta_{0} + |c_{1} + \dots + c_{n-1}|) & = & 0, \\
    c_{2} \eta_{0} - |c_{2}| + 3((c_{1} + \dots + c_{n-1})\eta_{0} + |c_{1} + \dots + c_{n-1}|) & = & 0, \\
    & \vdots & \\
    c_{n-1} \eta_{0} - |c_{n-1}| + n((c_{1} + \dots + c_{n-1})\eta_{0} + |c_{1} + \dots + c_{n-1}|) & = & 0. \\
  \end{array}
  \right\}
\end{equation}
The following table summarizes all possible values of $\eta_{0}$ obtained
from (\ref{rce2}) for $n=2,\ 3,\ 4$, and the slopes 
$\dd \beta / \dd \alpha = (\eta_{0} - 1)/(\eta_{0} + 1)$
of the possible \fucik\ curves emanating from $(0,0)$:

\begin{center}
\begin{tabular}{c|cc}
  $n$ & $\eta_{0}$ & slopes \\[3pt]
\hline\\[-9pt]
   2  & $-\frac{1}{3},  +\frac{1}{3}$
      & $-2, -\frac{1}{2}$ \\[6pt]
   3  & $-\frac{2}{3}, -\frac{1}{3}, 0, +\frac{1}{3}, +\frac{2}{3} $
      & $-5, -2, -1, -\frac{1}{2}, -\frac{1}{5}$ \\[6pt]
   4  & $-\frac{4}{5}, -\frac{3}{5}, -\frac{2}{5}, -\frac{1}{5}, 0, +\frac{1}{5}, +\frac{2}{5}, +\frac{3}{5}, +\frac{4}{5}$
      & $-9, -4, -\frac{7}{3}, -\frac{3}{2}, -1, -\frac{2}{3}, -\frac{3}{7}, -\frac{1}{4}, -\frac{1}{9}$
\end{tabular}
\end{center}
Let us note that for $n = 2$, the multiplicity of the eigenvalue $\lambda = 0$ is $1$
and we have $1$ pair of \fucik\ curves emanating from $(0,0)$ with two different slopes. 
However, for $n=4$, the multiplicity of $\lambda = 0$ is $3$, 
but there are $5$ pairs of \fucik\ curves emanating with $9$ different slopes (two curves coincide). 
\end{example}

The following examples illustrate the application of both 
Theorems \ref{th_NC1} and \ref{th_SC}, i.e., 
both necessary and sufficient conditions for the existence of \fucik\ curves.

\begin{figure}[t]
  \centerline{
  \setlength{\unitlength}{1mm}
  \begin{picture}(145, 65)(-4, -4)
    \put(  0, 0){\includegraphics[height=6.0cm]{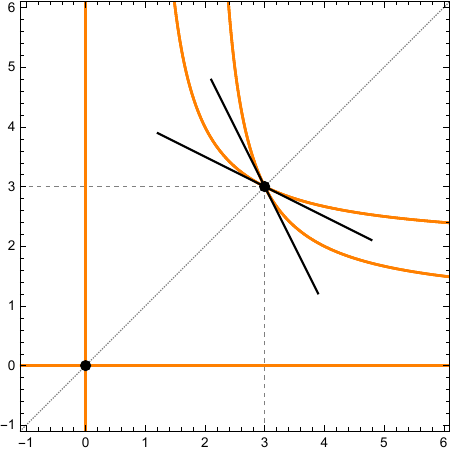}}
    \put( 80, 0){\includegraphics[height=6.0cm]{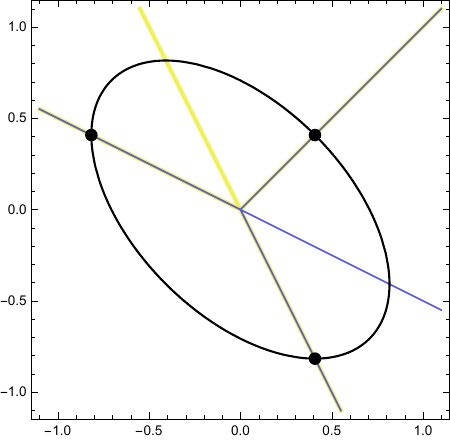}}
    \put( 47,-3){\makebox(0,0)[cm]{$\alpha$}}    
    \put( -3,47){\makebox(0,0)[cm]{$\beta$}}
    \put(130,-3){\makebox(0,0)[cm]{$c_{2}$}}
    \put( 78,47){\makebox(0,0)[cm]{$c_{3}$}}    
  \end{picture}
  }
  \caption{The \fucik~spectrum as orange curves for $3\times 3$ matrix
  $A_{1}$ (left) and its tangent lines (black lines) on the diagonal $\alpha = \beta$.
  Solutions $(c_{2}, c_{3})$ of the nonlinear system \eqref{ns1}-\eqref{ns3}
  for $\eta_{0} = 1/3$ (right) as intersections
  of the ellipse given by \eqref{ns1} and yellow and blue half-lines given by
  \eqref{ns2} and \eqref{ns3}, respectively.}
  \label{obr_01}
\end{figure}

\begin{example} \label{ex1}
Let us consider the symmetric matrix
\begin{displaymath}
  A_{1} = \left[
  \begin{array}{rrr}
     2 & -1 & -1 \\
    -1 &  2 & -1 \\
    -1 & -1 &  2
  \end{array}\right]
\end{displaymath}
with the eigenvalues $\lambda_{1} = 0$, $\lambda_{2} = \lambda_{3} = 3$
and the corresponding eigenvectors
$$
  v_{1} = [1,1,1]^{t}, \qquad
  v_{2} = [-1,0,1]^{t}, \qquad
  v_{3} = [-1,1,0]^{t}.
$$
The \fucik~spectrum of $A_{1}$ consists of four curves 
(see orange curves in Figure \ref{obr_01}, left):
\begin{align*}
  \alpha = 0,\qquad 
  \beta = 0,\qquad 
  \alpha\beta - 2\alpha - \beta = 0,\ \text{where }\alpha > 1,\qquad
  \beta\alpha - 2\beta - \alpha = 0,\ \text{where }\alpha > 2.  
\end{align*}
Now, we examine the situation in the point $(\lambda_2, \lambda_2) = (3,3)$. 
Let us take $u_{0} = c_{2} v_{2} + c_{3} v_{3}$, $c_{2},c_{3}\in\R$, and
solve the nonlinear system in \eqref{e.d7} which has the following form
\begin{align}
  2\left(c_{2}^2 + c_{2}c_{3} + c_{3}^2\right) & = 1, \label{ns1}\\ 
  |c_{2} + c_{3}| - |c_{2}| - \eta_{0}(2 c_{2} +   c_{3}) & = 0, \label{ns2}\\
  |c_{2} + c_{3}| - |c_{3}| - \eta_{0}(  c_{2} + 2 c_{3}) & = 0. \label{ns3}
\end{align}
It is straightforward to verify that this system is solvable if and only if 
$\eta_{0} = \pm 1/3$.
Moreover, all solutions of the system \eqref{ns1}-\eqref{ns3} have the following form
(see Figure \ref{obr_01}, right)
$$
  (\eta_{0}, c_{2}, c_{3}) = \left(\pm\tfrac{1}{3}, \pm\tfrac{1}{\sqrt{6}}, \pm\tfrac{1}{\sqrt{6}}\right),\quad
  (\eta_{0}, c_{2}, c_{3}) = \left(\pm\tfrac{1}{3}, \pm\tfrac{1}{\sqrt{6}}, \mp\tfrac{2}{\sqrt{6}}\right),\quad
  (\eta_{0}, c_{2}, c_{3}) = \left(\pm\tfrac{1}{3}, \mp\tfrac{2}{\sqrt{6}}, \pm\tfrac{1}{\sqrt{6}}\right).
$$
That is, for all these settings the necessary condition \eqref{e.NC1} is satisfied.
Let us take 
$u_{0} = (v_{2}+v_{3})/\sqrt{6} = [-2,1,1]^{t}/\sqrt{6}$
and $\eta_{0} = 1/3$.
We show that the nondegeneracy condition \eqref{ND} is also satisfied.
Indeed, any $w\in \Ker (A_{1} - \lambda_{2} I)$ perpendicular to $u_{0}$ can be written as
\begin{align*}
  w = q\cdot (v_{3} - v_{2}) = [0, q, -q]^{t},\quad q\in\R,
\end{align*}
and moreover, we have $P (\Xi(u_0) w) = P w = w$ and 
  $\skals{|u_{0}|}{u_{0}} w = -w/3$.
Thus, using Theorem \ref{th_SC}, we obtain the existence of 
a \fucik~curve emanating from $(\lambda_{2}, \lambda_{2}) = (3,3)$ with the slope 
$(\eta_{0}-1)/(\eta_{0}+1) = -1/2$.
Similarly, considering $u_{0} = -(v_{2}+v_{3})/\sqrt{6}$ 
and $\eta_0 = -1/3$, we obtain a \fucik~curve emanating from 
$(\lambda_{2}, \lambda_{2}) = (3,3)$ with the slope $-2$. 
The other cases can be treated in the same way with the same results.
\end{example}

\begin{figure}[t]
  \centerline{
  \setlength{\unitlength}{1mm}
  \begin{picture}(145, 65)(-4, -4)
    \put(  0, 0){\includegraphics[height=6.0cm]{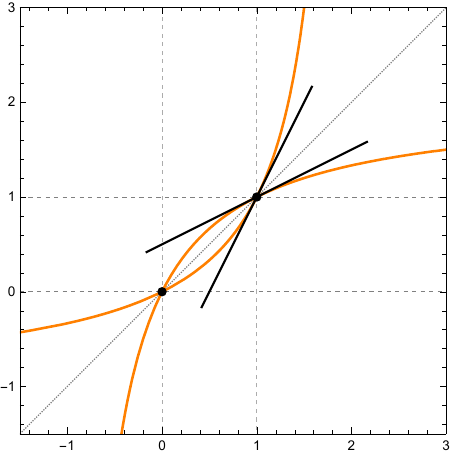}}
    \put( 80, 0){\includegraphics[height=6.0cm]{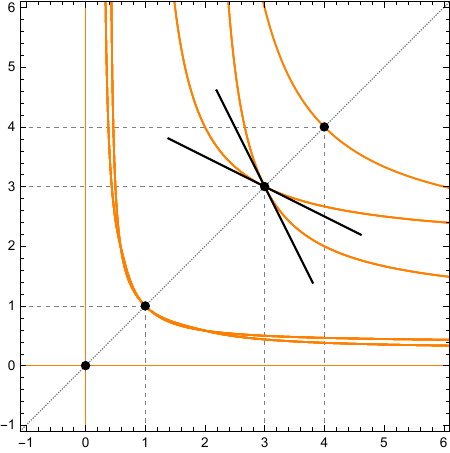}}
    \put( 47,-3){\makebox(0,0)[cm]{$\alpha$}}    
    \put( -3,47){\makebox(0,0)[cm]{$\beta$}}
    \put(130,-3){\makebox(0,0)[cm]{$\alpha$}}
    \put( 78,47){\makebox(0,0)[cm]{$\beta$}}    
  \end{picture}    
  }
  \caption{The \fucik~spectrum as orange curves for $3\times 3$ matrix $A_{2}$ (left)
  and for $6\times 6$ matrix $A_{3}$ (right) and their tangent lines (black lines) 
  on the diagonal $\alpha = \beta$.}  
  \label{obr_02}
\end{figure}

\begin{example} 
Let us consider the nonsymmetric matrix
\begin{displaymath}
  A_{2} = \left[
  \begin{array}{rrr}
     2 &  1 &  1 \\
    -1 &  0 & -1 \\
    -1 & -1 &  0
  \end{array}\right]
\end{displaymath}
with the eigenvalues $\lambda_{1} = 0$, $\lambda_{2} = \lambda_{3} = 1$
and the corresponding right and left eigenvectors
\begin{equation*}
\begin{array}{rcllrcllrcl}
  v_{1} \hspace*{-6pt}&=&\hspace*{-6pt} [-1,+1,+1]^{t}, & \quad &
  v_{2} \hspace*{-6pt}&=&\hspace*{-6pt} [-1,0,+1]^{t}, & \quad &
  v_{3} \hspace*{-6pt}&=&\hspace*{-6pt} [-1,+1,0]^{t},\\[3pt]
  v_{1}^{\ast} \hspace*{-6pt}&=&\hspace*{-6pt} [+1,+1,+1]^{t}, & \quad &
  v_{2}^{\ast} \hspace*{-6pt}&=&\hspace*{-6pt} [+1,0,+1]^{t}, & \quad &
  v_{3}^{\ast} \hspace*{-6pt}&=&\hspace*{-6pt} [+1,+1,0]^{t}.  
\end{array}  
\end{equation*}
(We recall that the left eigenvectors of $A$ are the eigenvectors of the $A^{\adj}$.)
The \fucik~spectrum of $A_{2}$ consists of two curves (see orange curves in Figure \ref{obr_02}, left):
\begin{align*}
  \alpha\beta - 2\alpha + \beta = 0,\ \text{where }\alpha > -1,\qquad
  \beta\alpha - 2\beta + \alpha = 0,\ \text{where }\alpha < 2.  
\end{align*}
Now, let us take 
$u_{0} = (v_{2}+v_{3})/\sqrt{6} = [-2,1,1]^{t}/\sqrt{6}$
and $\eta_{0} = 3$.
At first, the necessary condition \eqref{e.NC1} is satisfied.
Indeed, we have $|u_{0}| + \eta_{0} u_{0} = 4[-1,1,1]^{t}/\sqrt{6}$ and
$\skals{|u_{0}| + \eta_{0} u_{0}}{a v_{2}^{\ast} + b v_{3}^{\ast}} = 0$ for all $a,b\in\R$.
At second, the nondegeneracy condition \eqref{ND} is also satisfied.
Indeed, any $w\in \Ker (A_{2} - \lambda_{2} I)$ perpendicular to $u_{0}$ can be written as
\begin{align*}
  w = q\cdot (v_{3} - v_{2}) = [0, q, -q]^{t},\quad q\in\R,
\end{align*}
and moreover, we have $ Q(\Xi(u_0) w + \eta_{0} w) = Q(w+\eta_{0} w) = (1 + \eta_{0})w$ and 
  $c\cdot Q u_{0} = -c[2,1,1]^{t} /(3\sqrt{6})$.
Thus, using Theorem \ref{th_SC}, we obtain the existence of 
a \fucik~curve emanating from $(\lambda_{2}, \lambda_{2}) = (1,1)$ with the slope 
$(\eta_{0}-1)/(\eta_{0}+1) = 1/2$.
Similarly, considering $u_{0} = -(v_{2}+v_{3})/\sqrt{6}$ and $\eta_0 = -3$, 
we obtain a \fucik~curve emanating from 
$(\lambda_{2}, \lambda_{2}) = (1,1)$ with the slope $2$. 
Moreover, no other $\eta_0$ satisfies \eqref{e.NC1} for $\lambda = \lambda_2$.

\end{example}

\begin{example}
Let us consider the symmetric matrix
\begin{displaymath}
  A_{3} = \left[
  \begin{array}{rrrrrr}
     2 & -1 & 0 & 0 & 0 & -1 \\
    -1 &  2 & -1 & 0 & 0 & 0 \\
    0 & -1 &  2 & -1 & 0 & 0 \\
    0 & 0 & -1 &  2 & -1 & 0 \\ 
    0 & 0 & 0 & -1 &  2 & -1 \\     
    -1 & 0 & 0 & 0 & -1 &  2  \\         
  \end{array}\right]
\end{displaymath}
with the eigenvalues 
$\lambda_{1} = 0$, 
$\lambda_{2} = \lambda_{3} = 1$,
$\lambda_{4} = \lambda_{5} = 3$,
$\lambda_{6} = 4$
and the corresponding eigenvectors
\begin{equation*}
\begin{array}{llllllllllll}  
  v_{1} \hspace*{-6pt}&=&\hspace*{-6pt} [1,1,1,1,1,1]^{t}, &\quad&
  v_{2} \hspace*{-6pt}&=&\hspace*{-6pt} [-1,-1,0,1,1,0]^{t}, &\quad& 
  v_{3} \hspace*{-6pt}&=&\hspace*{-6pt} [1,0,-1,-1,0,1]^{t}, \\[6pt]  
  v_{4} \hspace*{-6pt}&=&\hspace*{-6pt} [-1,0,1,-1,0,1]^{t}, &&
  v_{5} \hspace*{-6pt}&=&\hspace*{-6pt} [-1,1,0,-1,1,0]^{t}, &&
  v_{6} \hspace*{-6pt}&=&\hspace*{-6pt} [-1,1,-1,1,-1,1]^{t}.      
  \end{array}
\end{equation*}
Let us note that $A_3$ corresponds to the discretization 
of the one-dimensional Laplace operator with periodic boundary conditions.
Using Theorem \ref{th_SC} as in Example \ref{ex1}, we obtain the existence of 
the \fucik~curves emanating from $(\lambda_{4}, \lambda_{4}) = (3,3)$ with slopes 
$-1/2$ and $-2$ (see Figure \ref{obr_02}, right).
Now, let us consider the \fucik~curves that emanate from $(\lambda_{2}, \lambda_{2}) = (1,1)$
and let us take 
$u_{0} = (v_{2}-v_{3})/(2\sqrt{3})=[-2,-1,1,2,1,-1]^{t}/(2\sqrt{3})$
and $\eta_{0} = 0$.
Then the necessary condition \eqref{e.NC1} is satisfied since 
$\langle|u_{0}|, a v_{2} + b v_{3}\rangle = 0$ for all $a,b\in\R$.
Unfortunately, the nondegeneracy condition \eqref{ND} having the form of \eqref{NDS} is not satisfied.
Indeed, if we take the nonzero $w\in\Ker(A_{3} - \lambda_{2}I)$ as 
$w = v_{2} + v_{3} = [0,-1,-1,0,1,1]^{t}$ then 
it is perpendicular to $u_{0}$ and 
the equality $P (\Xi(u_0) w) = \skals{|u_{0}|}{u_{0}} w$ in \eqref{NDS}
is also satisfied since $\skals{|u_{0}|}{u_{0}} = 0$ 
and the vector $\Xi(u_0) w = [0,1,-1,0,1,-1]^{t}$ is perpendicular to $\Ker(A_{3} - \lambda_{2}I)$.
\end{example}


\section{Degenerate Cases in Eigenvalues with Higher Algebraic Multiplicity}
\label{sec_algebraic}

The nondegeneracy condition (\ref{ND}) in Theorem \ref{th_SC} includes the requirement $Qu_0 \not= 0$, or, in other words, $u_0 \not\in \Img (A - \lambda I)$. This ensures us that there exists $v \in \Ker (A^{\ast} - \lambda I)$ such that
$\skals{u_0}{v} \not= 0$. Now, let us have a look what happens when $u_0 \in \Ker(A- \lambda I) \cap \Img (A - \lambda I)$, i.e.,
$$
\forall v \in \Ker (A^{\ast} - \lambda I): \quad \skals{u_0}{v} = 0.
$$
Notice that this implies the existence of
$u_g \in H$ such that
$$
(A - \lambda I) u_g = u_0.
$$
That is, $u_g$ is the generalized eigenvector corresponding to $\lambda$ and the algebraic multiplicity of $\lambda$ is greater than its geometric multiplicity!
Now, we distinguish two cases.

\bigskip

{\bf Case 1.} If
\begin{equation}
\label{e.al00}
\exists u_0 \in \Ker (A - \lambda  I) \cap \Img (A - \lambda I), \ \ \exists v \in \Ker (A^{\ast} - \lambda I): \ \ \skals{|u_0|}{v} \not= 0,
\end{equation}
then the condition (\ref{e.NC1}) implies $|\eta_0| = +\infty$ and there is a possible \fucik\ curve emanating from $(\lambda,\lambda)$ with the slope $\dd \beta/ \dd \alpha = 1$.

\bigskip

{\bf Case 2.} If
\begin{equation}
\label{e.al0}
\exists u_0 \in \Ker (A - \lambda  I) \cap \Img (A - \lambda I), \ \ \forall v \in \Ker (A^{\ast} - \lambda I): \ \ \skals{|u_0|}{v} =  0,
\end{equation}
then the process described in Section \ref{sec_main} fails. In such a case, (\ref{e.NC1}) is trivially satisfied and provides no information about $\eta_0$. Thus, we must proceed more carefully.

\begin{theorem}
\label{th_algebraic}
Let $A: ~H \to H$ be a linear operator represented by a $n$-by-$n$ matrix with a real eigenvalue $\lambda$. Let $u_{0} \in \Ker(A - \lambda I) \cap \Img (A - \lambda I)$, $u_{0} \not= 0$, be such that \eqref{e.al0} holds true.
If there exists a \fucik\ curve of $A$ emanating from the eigenvalue $\lambda$ described by $(\varepsilon,\eta(\varepsilon),u(\varepsilon))$ satisfying \eqref{e.d1} with $u_0=u(0)$ and $$ \lim_{\varepsilon \to 0\pm} \eta(\varepsilon) = \eta_{0\pm}, $$
then necessarily
there exist $z_{0\pm} \in \Ker (A - \lambda I)$, $\skals{z_{0\pm}}{u_{0}} = 0$, such that
\begin{equation}
\label{e.al8}
\forall v \in \Ker (A^{\ast} - \lambda I): \ \ 
\skals{\Xi(u_0)u_{1\pm} \pm \Xi^0(u_0)  |u_{1\pm}|}{v} + \eta_{0\pm}
\skals{u_{1\pm}}{v} = 0,
\end{equation}
with 
\begin{equation}
\label{e.al2}
 u_{1\pm} = (A - \lambda  I) \big|_{_{X_{2}}}^{-1} (|u_{0}| + \eta_{0\pm} u_{0} ) + z_{0\pm}.
\end{equation}
\end{theorem}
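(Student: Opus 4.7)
The plan is to revisit the Lyapunov--Schmidt decomposition from Section \ref{sec_prelim} and push it one order higher in $\varepsilon$, carefully tracking the non-smoothness of $|\cdot|$ at components where $u_0$ vanishes. The starting observation is that \eqref{e.al0} is precisely $Q|u_0|=0$, i.e., $|u_0|\in Y_2$; combined with $u_0\in\Img(A-\lambda I)=Y_2$ (so $Qu_0=0$), this makes $|u_0|+\eta_{0\pm}u_0\in Y_2$, hence $(A-\lambda I)\big|_{X_{2}}^{-1}(|u_0|+\eta_{0\pm}u_0)$ is a well-defined element of $X_2$.

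Next, I would write $u(\varepsilon)=u_0+\varepsilon u_1(\varepsilon)$ (normalized so that $\|u(\varepsilon)\|=\|u_0\|$), substitute into \eqref{e.d1}, and cancel one factor of $\varepsilon$ using $(A-\lambda I)u_0=0$ to obtain
$$(A-\lambda I)u_1(\varepsilon)=|u_0+\varepsilon u_1(\varepsilon)|+\eta(\varepsilon)u_0+\varepsilon\eta(\varepsilon)u_1(\varepsilon).$$
The essential expansion, valid for $|\varepsilon|$ small enough that the nonzero components of $u_0$ do not change sign, is
$$|u_0+\varepsilon u_1(\varepsilon)|=|u_0|+\varepsilon\,\Xi(u_0)u_1(\varepsilon)+|\varepsilon|\,\Xi^0(u_0)|u_1(\varepsilon)|,$$
and this is exactly where the factor $\sgn(\varepsilon)=\pm$ responsible for $u_{1+}\ne u_{1-}$ enters. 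Applying $(A-\lambda I)\big|_{X_{2}}^{-1}(I-Q)$ to both sides recovers $(I-P)u_1(\varepsilon)$ on the left and, after passing $\varepsilon\to0\pm$, yields
$$(I-P)u_{1\pm}=(A-\lambda I)\big|_{X_{2}}^{-1}(|u_0|+\eta_{0\pm}u_0),$$
which is formula \eqref{e.al2} once we set $z_{0\pm}:=Pu_{1\pm}\in\Ker(A-\lambda I)$. Applying $Q$ instead, the $O(1)$ contributions vanish thanks to $Q|u_0|=Qu_0=0$; dividing by $\varepsilon$ and passing to the one-sided limit produces exactly \eqref{e.al8}. Finally, the orthogonality $\langle z_{0\pm},u_0\rangle=0$ follows from the normalization, which forces $\langle u_0,u_1(\varepsilon)\rangle=-\tfrac{\varepsilon}{2}\|u_1(\varepsilon)\|^2\to0$, together with $(I-P)u_{1\pm}\in X_2\perp u_0$.

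The main technical obstacle is the existence of the one-sided limit $u_{1\pm}$ itself: the range equation pins down $(I-P)u_1(\varepsilon)$ in the limit, so only the kernel component $Pu_1(\varepsilon)\in X_1$ must be controlled. In the finite-dimensional setting this reduces to boundedness of $Pu_1(\varepsilon)$ as $\varepsilon\to0\pm$, after which any subsequential limit furnishes an admissible $z_{0\pm}$. This is the only step where some regularity of the parameterization of the curve---or a blow-up argument ruling out $\|Pu_1(\varepsilon)\|\to\infty$---must be invoked.
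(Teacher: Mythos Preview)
Your proposal is correct and follows essentially the same route as the paper: assume a first-order expansion $u(\varepsilon)=u_0+\varepsilon u_1+o(\varepsilon)$, feed it into the range equation \eqref{e.d3} to identify $u_1$ up to an element of $\Ker(A-\lambda I)$, then feed it into the bifurcation equation \eqref{e.d4}, use $Qu_0=Q|u_0|=0$, divide by $\varepsilon$ and pass to the one-sided limit to obtain \eqref{e.al8}. The paper handles the orthogonality $\langle z_{0\pm},u_0\rangle=0$ by noting that adding multiples of $u_0$ to $u_{1\pm}$ leaves \eqref{e.al8} unchanged (so one may simply impose it), whereas you extract it from the normalization; and the paper simply postulates the expansion \eqref{e.al0a} rather than discussing, as you do, the boundedness of $Pu_1(\varepsilon)$---but these are cosmetic differences, not a different strategy.
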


\begin{proof}
We will perform the proof for $\varepsilon \to 0+$. The case $\varepsilon \to 0-$ can be treated in the same way.
Let us go back to the decomposition (\ref{e.d2})-(\ref{e.d3}), or (\ref{e.d3})-(\ref{e.d4}), respectively, and suppose that
\begin{equation}
\label{e.al0a}
u (\varepsilon) = u_{0} + \varepsilon u_{1} + o(\varepsilon)
\end{equation}
with $u_0 \in \Ker(A- \lambda I) \cap \Img (A - \lambda I)$ satisfying (\ref{e.al0}).
The equation (\ref{e.d3}) is then equivalent to
$$
\varepsilon (A - \lambda I) (u_{1} + o(1)) = \varepsilon (I - Q) (|u(\varepsilon)| + \eta(\varepsilon) u(\varepsilon)).
$$
Dividing by $\varepsilon$ and letting $\varepsilon \to 0+$, we obtain
$$
(A - \lambda I) u_{1} = (I - Q) (|u_{0}| + \eta_{0+} u_{0}) = |u_{0}| + \eta_{0+} u_{0},
$$
since $Q(|u_{0}|) = Q u_{0} = 0$.
Moreover, the restriction $(A - \lambda  I) \big|_{_{X_{2}}}: X_{2} \to Y_{2}$ is invertible and thus
$$
 u_{1} = (A - \lambda  I) \big|_{_{X_{2}}}^{-1} (|u_{0}| + \eta_{0+} u_{0}) + z_{0},
$$
where $z_{0} \in \Ker (A - \lambda  I)$ is such that $\skals{z_{0}}{u_{0}} = 0$. 
(Notice that adding any multiple of $u_{0}$ to $u_{1}$ has no influence due to (\ref{e.al0}).)

For $\varepsilon$ small enough, we have
$$
|u(\varepsilon)| = |u_0 + \varepsilon u_1 + o(\varepsilon))| = |u_0| + \Xi(u_0)(\varepsilon u_1 + o(\varepsilon)) + \Xi^0(u_0) |\varepsilon u_1 + o(\varepsilon)|.
$$
Hence, taking the equation (\ref{e.d4}), i.e.,
$$
Q (|u(\varepsilon)| + \eta(\varepsilon) u(\varepsilon)) = 0,
$$
substituting from above and using $Q(u_0) = Q(|u_0|) = 0$, implies
$$
Q (\Xi(u_0)(\varepsilon u_1 + o(\varepsilon)) + \Xi^0(u_0) |\varepsilon u_1 + o(\varepsilon)|) + \eta(\varepsilon) 
Q (\varepsilon u_1 + o(\varepsilon)) = 0.
$$
Dividing now by $\varepsilon$ and letting $\varepsilon \to 0+$ leads to
$$
Q (\Xi(u_0)u_1 + \Xi^0(u_0) |u_1|) + \eta_{0+}
Q (u_1) = 0,
$$
which is the proved assertion.
\end{proof}

\begin{remark}
Since $(A - \lambda I) \big|_{_{X_{2}}}^{-1} u_{0} = u_{g}$, the relation (\ref{e.al2}) can be simplified into
\begin{equation}
\label{e.al3}
 u_{1\pm} = (A - \lambda I) \big|_{_{X_{2}}}^{-1} |u_{0}| + \eta_{0\pm} u_{g} + z_{0\pm}.
\end{equation}
Moreover,
if $|u_{0}| = u_{0}$, then $(A - \lambda I) \big|_{_{X_{2}}}^{-1} |u_{0}| = u_{g}$ and (\ref{e.al2}) becomes
\begin{equation}
\label{e.al6}
u_{1\pm} = (1 + \eta_{0\pm}) u_{g} + z_{0\pm}.
\end{equation}
Finally, if $\lambda$ is simple, $z_{0\pm} \equiv 0$.
\end{remark}

\begin{figure}[t]
  \centerline{
  \setlength{\unitlength}{1.27mm}
  \begin{picture}(70, 70)(0, 0)
    \put(0, 0){\includegraphics[height=9cm]{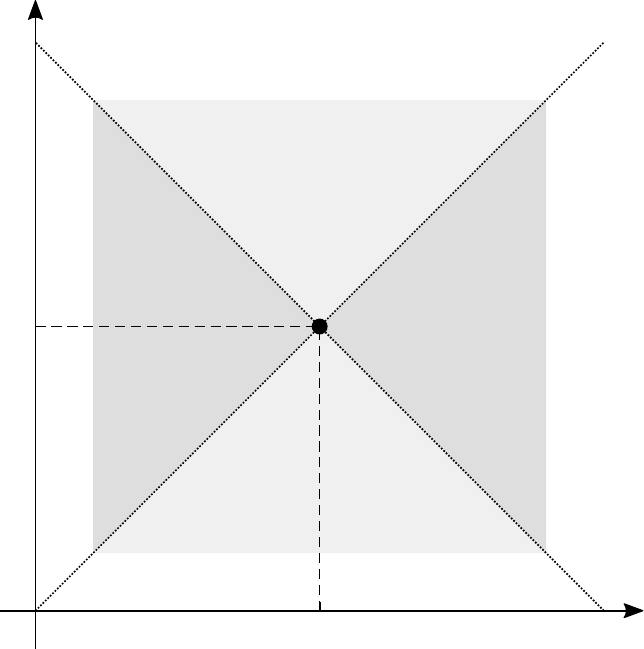}}
    \put(63.5,1){\makebox(0,0)[cm]{$\alpha$}}
    \put(1,63.5){\makebox(0,0)[cm]{$\beta$}}
    \put(56.5,63.5){\makebox(0,0)[cm]{$\beta = \alpha$}}    
    \put(17,63.5){\makebox(0,0)[cm]{$\beta = 2\lambda - \alpha$}}        
    \put(35,1){\makebox(0,0)[cm]{$\lambda$}}
    \put(1,35){\makebox(0,0)[cm]{$\lambda$}}
    \put(49,31){\makebox(0,0)[cm]{$\eta_{0+} \ge 0$}}
    \put(35,53){\makebox(0,0)[cm]{$\eta_{0-} \le 0$}}    
    \put(22,31){\makebox(0,0)[cm]{$\eta_{0-} \ge 0$}}
    \put(42,18){\makebox(0,0)[cm]{$\eta_{0+} \le 0$}}
    \put(22,39){\makebox(0,0)[cm]{$\QW(\lambda)$}}        
    \put(35,47){\makebox(0,0)[cm]{$\QN(\lambda)$}}
    \put(29,18){\makebox(0,0)[cm]{$\QS(\lambda)$}}    
    \put(49,39){\makebox(0,0)[cm]{$\QE(\lambda)$}}                    
  \end{picture}
  }
  \caption{Quadrants determined by signs of $\eta_{0\pm}$.}
  \label{kvadranty}
\end{figure}

\begin{remark}
\label{rem_kvadranty}
Notice that the sign of $\eta_{0\pm}$ determines the ``direction'' of the possible \fucik\ curve bifurcating from $(\lambda, \lambda)$ with the slope $(\eta_{0\pm} - 1)/(\eta_{0\pm} + 1)$. In particular,
$$
\begin{array}{lclcl}
\eta_{0+} \geq 0 & \Leftrightarrow & 
(\alpha,\beta)\in\QE(\lambda) := 
\left\{(\alpha,\beta)\in\R^{2}:\ (\alpha - \lambda) \geq |\beta - \lambda|\right\}, \\[3pt]
\eta_{0+} \leq 0 & \Leftrightarrow & 
(\alpha,\beta)\in\QS(\lambda) := 
\left\{(\alpha,\beta)\in\R^{2}:\ (\beta - \lambda) \leq -|\alpha - \lambda|\right\}, \\[3pt]
\eta_{0-} \geq 0 & \Leftrightarrow & 
(\alpha,\beta)\in\QW(\lambda) := 
\left\{(\alpha,\beta)\in\R^{2}:\ (\alpha - \lambda) \leq -|\beta - \lambda|\right\}, \\[3pt]
\eta_{0-} \leq 0 & \Leftrightarrow & 
(\alpha,\beta)\in\QN(\lambda) := 
\left\{(\alpha,\beta)\in\R^{2}:\ (\beta - \lambda) \geq |\alpha - \lambda|\right\}. 
\end{array}
$$
Thus, the sign of $\eta_{0\pm}$ determines uniquely the ``quadrant'' in $\alpha\beta$ plane (see Figure \ref{kvadranty}).
It means that if the algebraic multiplicity of the eigenvalue $\lambda$ is higher than the geometric one, the bifurcating \fucik\ curves can be nonsmooth with``one-sided tangent lines'' at $(\lambda,\lambda)$. Notice, that this cannot happen if the geometric and algebraic multiplicities of $\lambda$ coincide.
\end{remark}


\begin{remark}
\label{rem_higher}
Again, relation (\ref{e.al8}) with $u_{1}$ given by (\ref{e.al2}) can degenerate and 
provide no information about $\eta_{0\pm}$ if
$$
\skals{\Xi(u_0)u_{1\pm} \pm \Xi^0(u_0) |u_{1\pm}|}{v} =
\skals{u_{1\pm}}{v} = 0
$$
for any $v \in \Ker (A^{\ast} - \lambda I)$. Notice that this can happen
in some cases when the difference of algebraic and geometric multiplicities of $\lambda$ is 2 or more.
For example, the following matrix 
$$
  A_{4} = \left[
  \begin{array}{rrrr}
    2 &  0 & -2 &  0 \\
    2 & -1 & -2 &  1 \\
    0 &  1 &  0 & -1 \\
   -2 &  3 &  2 &  1
  \end{array}\right]
$$
has the zero eigenvalue with simple geometric multiplicity and higher algebraic multiplicity
such that the relation (\ref{e.al8}) provides no information about $\eta_{0\pm}$.
In such a case, we must use more precise expressions of $u(\varepsilon)$ 
than (\ref{e.al0a}) and include higher order terms. 
\end{remark}


\section{Examples of Degenerate Cases}
\label{sec_examples2}

\begin{example}
\label{ex_degenerate}
Let us consider a matrix
$$
A_{5} = \left[
\begin{array}{rrrr}
3 & -3 & 3 & 0\\
-1 & 6 & -1 & -1\\
1 & -3 & 1 & 4\\
-1 & 6 & -1 & -1
\end{array}
\right]
$$
with the eigenvalues $\lambda_1 = 0$, $\lambda_2 = 3$, $\lambda_3 = 6$.
The eigenvector corresponding to $\lambda_1 = 0$ can be chosen as
$u_0 = [-6, 0, 6, 0]^t$.
The adjoint eigenvector corresponding to $\lambda_1 = 0$ is
$v = [0, 6, 0, -6]^t$
and we have
$\skals{u_0}{v} = \skals{|u_0|}{v} = 0$.
Hence, there must exist a generalized eigenvector $u_g$ to $\lambda_1$, e.g.,
$u_g = [-5, 0, 3, 2]^t$.
Moreover, since $\Ker A_{5}  = \sspan \{u_0\}$, we have $z_{0\pm} \equiv 0$.
That is, $u_{1\pm} = 2 [2,1,2,2]^t/3 + \eta_{0\pm} [-5, 0, 3, 2]^t$ and
the equality (\ref{e.al8}) reads as
\begin{eqnarray*}
1 - |2 + 3\eta_{0+}| + \eta_{0+}(1 - (2+ 3\eta_{0+})) & = & 0,\\
1 - |2 + 3\eta_{0-}| - \eta_{0-}(1 - (2+ 3\eta_{0-})) & = & 0.
\end{eqnarray*}
Hence, we obtain the following results:
\begin{center}
\begin{tabular}{ccc}
  $\eta_{0+}$ &  slope & region \\[3pt]
\hline\\[-9pt]
$-\frac{1}{3}$ & $-2$ & $\QS(0)$ \\[6pt]
$\frac{1}{3}(1-\sqrt{10})$  & $-3 - \sqrt{10}$ & $\QS(0)$ 
\end{tabular}
\hspace{1cm}
\begin{tabular}{ccc}
  $\eta_{0-}$ &  slope & region \\[3pt]
\hline\\[-9pt]
$-\frac{1}{3}$  & $-2$ & $\QN(0)$\\[6pt]
$1$ & $0$ & $\QW(0)$
\end{tabular}
\end{center}
\begin{figure}[h]
  \centerline{
  \setlength{\unitlength}{1mm}
  \begin{picture}(145, 65)(-4, -4)
    \put(  0, 0){\includegraphics[height=6.0cm]{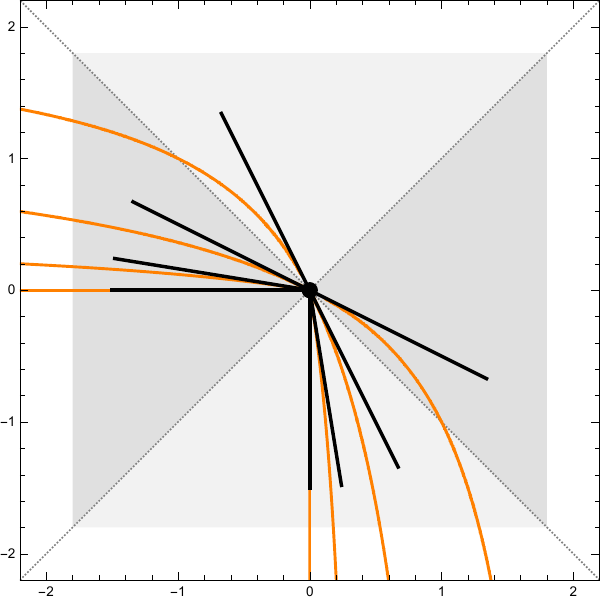}}
    \put( 80, 0){\includegraphics[height=6.0cm]{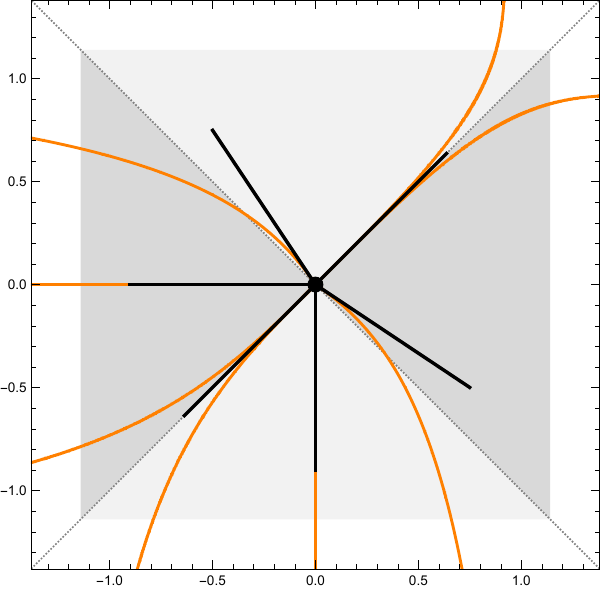}}
    \put( 47,-3){\makebox(0,0)[cm]{$\alpha$}}    
    \put( -3,47){\makebox(0,0)[cm]{$\beta$}}
    \put(130,-3){\makebox(0,0)[cm]{$\alpha$}}
    \put( 78,47){\makebox(0,0)[cm]{$\beta$}} 
    \put(15,25){\makebox(0,0)[cm]{\footnotesize $\QW(0)$}}        
    \put(32,50){\makebox(0,0)[cm]{\footnotesize $\QN(0)$}}
    \put(24,14){\makebox(0,0)[cm]{\footnotesize $\QS(0)$}}    
    \put(48,31){\makebox(0,0)[cm]{\footnotesize $\QE(0)$}}         
    \put(96,25){\makebox(0,0)[cm]{\footnotesize $\QW(0)$}}        
    \put(113,50){\makebox(0,0)[cm]{\footnotesize $\QN(0)$}}
    \put(106,14){\makebox(0,0)[cm]{\footnotesize $\QS(0)$}}    
    \put(129,31){\makebox(0,0)[cm]{\footnotesize $\QE(0)$}}                
  \end{picture}
  }
  \caption{Tangent lines of \fucik~curves emanating from $(0, 0)$, where $0$ 
           is the eigenvalue with higher algebraic multiplicity of $4 \times 4$
           matrices $A_{5}$ (left) and $A_{6}$ (right).}
  \label{fig_degenerate}
\end{figure}
Similarly, for the choice $-u_0$, the equality (\ref{e.al8}) reads as
\begin{eqnarray*}
1 - |2 - 3\eta_{0+}| + \eta_{0+}(1 - (2- 3\eta_{0+})) &=& 0,\\
1 - |2 - 3\eta_{0-}| - \eta_{0-}(1 - (2- 3\eta_{0-})) &=& 0,
\end{eqnarray*}
and we obtain the following results:
\begin{center}
\begin{tabular}{ccc}
$\eta_{0+}$ & slope & region \\[3pt]
\hline\\[-9pt]
$-1$ &  $\infty$ & $\QS(0)$ \\[6pt]
$\frac{1}{3}$  & $-\frac{1}{2}$ & $\QE(0)$ 
\end{tabular}
\hspace{1cm}
\begin{tabular}{ccc}
  $\eta_{0-}$ &  slope & region \\[3pt]
\hline\\[-9pt]
$-\frac{1}{3}(1-\sqrt{10})$  & $3 - \sqrt{10}$ & $\QW(0)$ \\[6pt]
$\frac{1}{3}$  & $-\frac{1}{2}$ & $\QW(0)$
\end{tabular}
\end{center}
The \fucik\ curves of $A_{5}$ emanating from $(0, 0)$ together with their tangent lines 
are visualized in Figure~\ref{fig_degenerate} (left). 
Similarly, we could obtain slopes of tangent lines of \fucik\ curves of 
the following matrix (visualized in Figure \ref{fig_degenerate} (right))
$$
  A_{6} = \left[
  \begin{array}{rrrr}
    -2 &  2 &  0 &  2 \\
     6 & -4 &  2 & -10 \\
    -2 &  2 &  0 &  6 \\
    -4 &  3 & -1 &  6
  \end{array}\right].
$$

\end{example}

\section{Generalization}
\label{sec_general}

The first possible generalization concerns the situation off the diagonal. Statements analogous to Theorems \ref{th_NC1}, \ref{th_NC2}, \ref{th_SC} can be formulated replacing $(\lambda,\lambda)$ by a \fucik\ eigenpair $(\alpha_0,\beta_0) \in \Sigma(A)$. However, due to the lack of linearity, the assumption of simplicity of $(\alpha_0,\beta_0)$ is now essential. Moreover, notice that for $A$ non-selfadjoint, the corresponding adjoint problem to \eqref{e.0} reads as
$$
A^{\ast} v = \tfrac{1}{2}\left(\alpha (I + \Xi(u)) + \beta(I - \Xi(u)) \right)v,
$$
where $u$ is the \fucik\ eigenfunction with all components nonzero corresponding to the simple \fucik\ eigenpair $(\alpha,\beta) \in \Sigma(A)$. That is, the adjoint problem is linear with coefficients driven by the nodal properties of the solution of the original (nonlinear) \fucik\ problem. Due to these technical obstacles we do not formulate and prove the assertions within this paper. The case of $A$ selfadjoint is treated in \cite{maroncelli}.

\bigskip

The second possible generalization concerns more general structures than matrices. In particular, let $H$ be a Hilbert space over the field of real numbers $\R$, endowed with a scalar product $\skals{\cdot}{\cdot}$ and a norm $\norma{\cdot}$ induced by the scalar product.
Moreover, let $H$ be ordered by a closed convex cone $K$ such that $(H, \skals{\cdot}{\cdot}, K)$ is a Hilbert lattice. For any element $u \in H$, we can then define its positive and negative parts by
$$
u^+ = 0 \vee u = \PK(u),\qquad
u^- = 0 \vee (-u) =  -\PP_{\rm -K}(u),
$$
where $\PK$ denotes the orthogonal projection onto $K$. 

Further, let $L: \dom(L) \subset H \to H$ be a linear closed operator
with $\dom(L)$ {dense in} $H$.
The \fucik~spectrum of  $L$ is again the set $\Sigma(L)$ of all pairs
$(\alpha, \beta)\in\R^{2}$, for which the problem $Lu = \alpha u^+ - \beta u^-$
has a~nontrivial solution~$u$.

If $\lambda \in \sigma(L) \cap \R$ is an isolated real eigenvalue of $L$ and 
$(L - \lambda I)$ is a Fredholm operator, i.e.,
$$
\dim \Ker (L - \lambda  I) < +\infty, \qquad \Img (L - \lambda  I) \mbox{~is~closed}, \qquad \codim \Img (L - \lambda I) < +\infty,
$$
then we can formulate statements analogous to Theorems \ref{th_NC1}, \ref{th_NC2}. 
If we in addition assume that the projection operator $\PK$ on the cone $K$ has 
a strong Fr\'echet derivative at the eigenvector $u_{0}$ corresponding to $\lambda$, 
we can formulate also an analogue of Theorem \ref{th_SC}. Again, due to technical 
difficulties and in an effort to preserve the clarity of the text, we do not present 
these results in detail within this paper. The case of $H = L^2(\Omega)$ with a cone 
$K = \{u \in H: ~u = u(x) \geq 0 ~~\mbox{a.e.~} x \in \Omega\}$
and $L$ being a real linear selfadjoint differential operator is treated in \cite{bfs1}, \cite{bfs2}.


\bigskip

\begin{acknowledgement}
	The first author was supported by the Grant Agency of the Czech Republic, Grant No. 22--18261S.
\end{acknowledgement}

\bibliographystyle{abbrv}
\bibliography{references}

\begin{thebibliography}{1}

\bibitem{bfs1}
A.~Ben-Naoum, C.~Fabry, and D.~Smet.
\newblock Resonance with respect to the {F}u{\v{c}}{\'\i}k spectrum.
\newblock {\em Electronic Journal of Differential Equations}, 2000(37):1--21,
  2000.

\bibitem{bfs2}
A.~K. Ben-Naoum, C.~Fabry, and D.~Smets.
\newblock Structure of the {F}u{\v{c}}{\'\i}k spectrum and existence of
  solutions for equations with asymmetric nonlinearities.
\newblock {\em Proceedings of the Royal Society of Edinburgh Section A:
  Mathematics}, 131(2):241--265, 2001.

\bibitem{hn}
G.~Holubov{\'a} and P.~Ne{\v{c}}esal.
\newblock The {F}u{\v{c}}{\'\i}k spectrum: exploring the bridge between
  discrete and continuous world.
\newblock In {\em Differential and Difference Equations with Applications:
  Contributions from the International Conference on Differential \& Difference
  Equations and Applications}, pages 421--428. Springer, 2013.

\bibitem{nl}
I.~Looseov{\'a} and P.~Ne{\v{c}}esal.
\newblock The {F}u{\v{c}}{\'\i}k spectrum of the discrete {D}irichlet operator.
\newblock {\em Linear Algebra and its Applications}, 553:58--103, 2018.

\bibitem{margulies}
C.~Margulies and W.~Margulies.
\newblock Nonlinear resonance set for nonlinear matrix equations.
\newblock {\em Linear algebra and its applications}, 293(1-3):187--197, 1999.

\bibitem{maroncelli}
D.~Maroncelli.
\newblock The {F}u{\v{c}}{\'\i}k spectrum for discrete systems and some
  nonlinear existence theorems.
\newblock {\em Linear Algebra and its Applications}, 684:63--86, 2024.

\bibitem{ps}
P.~Stehl{\'\i}k.
\newblock Discrete {F}u{\v{c}}{\'\i}k spectrum-anchoring rather than pasting.
\newblock {\em Boundary Value Problems}, 2013:1--11, 2013.

\end{thebibliography}
\end{document}